\theoremstyle{definition} \newtheorem{definition}{Definition}[section]
\theoremstyle{definition} 
\theoremstyle{remark}     \newtheorem{remark}[definition]{Remark}
\theoremstyle{plain}      \newtheorem{theorem}[definition]{Theorem}
\theoremstyle{plain}      \newtheorem{proposition}[definition]{Proposition}
\theoremstyle{plain}      
\theoremstyle{plain}      \newtheorem{lemma}[definition]{Lemma}
\theoremstyle{plain}      
\theoremstyle{plain}      
\theoremstyle{definition} 
  \newcommand{\B}[1]{\mathcal{B}(#1)}
  \newcommand{\norme}[1]{\left\| #1 \right\|}
  \newcommand{\abs}[1]{\left\vert #1 \right\vert}
  \newcommand{\ps}[2]{\left\langle #1,#2 \right\rangle}
  \newcommand{\Matrice}{\mathcal{M}}
  \newcommand{\N}{\mathbb{N}}
  \newcommand{\C}{\mathbb{C}}
  \newcommand{\Spec}[1]{\sigma(#1)} 
  \newcommand{\D}{\mathbb{D}} 
  \newcommand{\T}{\mathbb{T}} 
  \newcommand{\NumRay}[1]{ w(#1) }
  \newcommand{\NumRayJoint}{w_J}
  \newcommand{\Reel}{\mathsf{Re}}
  \newcommand{\NumRho}[1]{ w_{#1} }
  \newcommand{\Aut}{\mathrm{Aut}}
  \newcommand{\Free}{\mathbb{F}}
  \newcommand{\conj}[1]{\overline{#1}}
  \newcommand{\adherence}[1]{\overline{#1}}
\numberwithin{equation}{section}
\title[Spectral sets and operator radii]{Spectral sets and operator radii}
 \author[C. Badea]{Catalin Badea}
   \address{(C. Badea) Univ. Lille, CNRS, Laboratoire Paul Painlev\'{e} - UMR 8524, Lille, France}
 \email{catalin.badea@univ-lille.fr}
\author[M. Crouzeix]{Michel Crouzeix}
 \address{(M. Crouzeix) Univ. Rennes, CNRS, IRMAR - UMR 6625, Rennes, France}
\email{michel.crouzeix@univ-rennes1.fr}
 \author[H. Klaja]{Hubert Klaja}
   \address{(H. Klaja) \'Ecole Centrale de Lille, CNRS, Laboratoire Paul Painlev\'{e} - UMR 8524, Lille, France}
 \email{hubert.klaja@gmail.com}
\keywords{spectral set, numerical range, operator radii}
\subjclass[2010]{Primary 47A12, 47A25}
\thanks{This work was supported in part by
 the project FRONT of the French
National Research Agency (grant ANR-17-CE40-0021), by the Labex CEMPI (ANR-11-LABX-0007-01) and by a Project PEPS 2018 (CNRS)}
\begin{document}

\begin{abstract}
We study different operator radii of homomorphisms from an operator algebra into $\B{H}$ and show that these can be computed explicitly in terms of the usual norm. As an application, we show that if $\Omega$ is a $K$-spectral set for a Hilbert space operator, then it is a $M$-numerical radius set, where $M=\frac{1}{2}(K+K^{-1})$. This is a counterpart of a recent result of Davidson, Paulsen and Woerdeman. More general results for operator radii associated with the class of operators having $\rho$-dilations in the sense of Sz.-Nagy and Foia\c{s} are given. 
A version of a result of Drury concerning the joint numerical radius of non-commuting $n$-tuples of operators is also obtained.
\end{abstract}
\maketitle

\section{Introduction}
Let $H$ be a complex Hilbert space and let $\B{H}$ be the C$^*$-algebra of all bounded linear operators acting on $H$.  
The celebrated von Neumann inequality \cite{vN} states that 
$$ \norme{p(T)} \le \sup \lbrace \abs{p(z)} : z \in \D \rbrace $$ 
for every $T \in \B{H}$ with $\norme{T}\le 1$ and every polynomial $p$ with complex coefficients. Here $\|\cdot\|$ is the operator norm and $\D$ is the open unit disc. One says that $T$ is a (Hilbert space) \emph{contraction} and that $\D$ is a \emph{spectral set} for $T$. 

More generally, for $K\ge 1$, a set $\Omega$ is said to be a \emph{$K$-spectral set} for an operator $T$ if for every rational function $f$ with poles off of $\overline{\Omega}$, one has
\begin{equation}\label{eq:11}
\| f(T)\| \le K \|f\|_\Omega  .
\end{equation}
Here $\|f\|_\Omega = \sup\{ |f(z)| : z \in \Omega\}$. 
The set $\Omega$ is a \emph{complete $K$-spectral set} if \eqref{eq:11} holds for all matrices with rational coefficients.
The inequality \eqref{eq:11} extends to $R(\overline{\Omega})$, the uniform closure of these rational functions in $\mathrm{C}(\overline{\Omega})$.
When the complement of $\Omega$ is connected, it suffices to verify this inequality for polynomials.
We simply call $\Omega$ a \emph{(complete) spectral set} when the constant $K$ is equal to $1$.
It was proved by Paulsen \cite{Paulsen_1984} that $\Omega$ is a complete $K$-spectral set for $T$ if and only if there is an invertible operator $L$ so that $\Omega$ is a complete spectral set for $LTL^{-1}$ and $\|L\|\,\|L^{-1}\| \le K$. We cannot replace in this statement complete $K$-spectral sets by $K$-spectral sets as a counter-example of Pisier  \cite{Pisier_1997} (with $\Omega=\D$) shows.
We refer the reader to two surveys \cite{Paulsen1, BadBeck} and two books \cite{PaulsenBook, PisierBook} for more information about these notions.

Some counterparts of the von Neumann inequality, and variants of the notion of spectral sets, are possible with the operator norm replaced by the numerical radius. Recall that the 
numerical range of an operator $T\in \B{H}$ is the set
$$ W(T) = \left\{ \ps{Tx}{x} : \|x\|=1 \right\} $$
and the
\emph{numerical radius} of $T$, given by
$$ \NumRay{T} = \sup \left\{ |\lambda| : \lambda\in W(T) \right\},$$
is an equivalent Banach space norm ($w(T)\le \norme{T} \le 2w(T)$), but not a Banach algebra norm. It was recently proved by the second named author and Palencia \cite{CrPa} that for any Hilbert space operator $T$, 
the numerical range $W(T)$ is a complete $(1+\sqrt{2})$-spectral set (the optimal constant is conjectured to be $2$). 
Earlier results by Berger and Stampfli \cite{Berger_Stampfli_1967} (see also Kato \cite{Kato_1965}) show that
\begin{equation}\label{eq:BergerStampfli}
\NumRay{p(T)} \le \sup \lbrace \abs{p(z)} : z \in \D \rbrace 
\end{equation}
for every $T \in \B{H}$ such that $\NumRay{T} \le 1 $, 
and for every polynomial $p$ such that $p(0)=0$.
In the case when $p(0) \ne 0 $, Drury \cite{Drury_2008} proved that
\begin{equation}\label{eq:drury}
\NumRay{p(T)} \le \frac{5}{4} \sup \lbrace \abs{p(z)} : z \in \D \rbrace 
\end{equation}
and that $5/4$ is sharp. See also \cite{Klaja_Mashreghi_Ransford_2016} for an alternate proof. This motivates the following terminology: following \cite{Davidson_Paulsen_Woerdeman_2017} we say that a set $\Omega$ is a \emph{$\tilde{K}$-numerical radius set} for $T$ if 
$$ w(f(T)) \le \tilde{K} \|f\|_\Omega \quad \textrm{for all } f \in R(\Omega) ,$$
and it is a \emph{complete $\tilde{K}$-numerical radius set} if the same inequality holds for matrices over $R(K)$. 

Davidson, Paulsen and Woerdeman \cite{Davidson_Paulsen_Woerdeman_2017} recently proved that $\Omega $ is a complete $K$-spectral set for $T$ if and only if
$\Omega$ is a complete $\tilde{K}$-numerical radius set for $T$, where $\tilde{K}=\frac{1}{2}\left(K + K^{-1} \right)$. The proof given in \cite{Davidson_Paulsen_Woerdeman_2017} uses several facts which are specific for complete spectral sets and complete bounded maps. It is a natural question to ask if the analogue result for $K$-spectral sets and $\tilde{K}$-numerical radius sets is true. It is one of the aims of this note to prove that the answer to this question is positive. The methods used in this paper are different than those of \cite{Davidson_Paulsen_Woerdeman_2017}. We also consider other operator radii instead of the numerical radius. These operator radii are associated with the class C$_\rho$ of operators having $\rho$-dilations which was introduced by Sz.-Nagy and Foia\c{s} (see \cite{Nagy_Foias}). Also, similarly as in \cite{Davidson_Paulsen_Woerdeman_2017}, we consider operator radii for homomorphisms from any operator algebra into $\B{H}$, and show that these can be computed explicitly in terms of the usual norm. In fact, our methods work for homomorphisms from any unital Banach algebra satisfying the von Neumann inequality.

\subsection*{Organization} The paper is organized as follows. A proof that a set is $K$-spectral for $T$ if and only if it is a $\tilde{K}$-numerical radius set (Theorem \ref{ThMainSimplified}) is given in the next section. Although a more general result for the operator radii $w_{\rho}$ will be proved later on, we hope that the proof of the numerical radius case will highlight the simple ideas used in the proofs. In Section~3 we give the definition of operator radii of a unital homomorphism from an operator algebra (or a Banach algebra which satisfies the von Neumann inequality) into $\B{H}$. In the same section the operator radii $w_{\rho}$ are revisited and the main result (Theorem \ref{ThMainTh}) is stated. Section~4 is devoted to the proof of the main result while in Section~5 we give several applications of Theorem \ref{ThMainTh} to (complete) $K$-spectral sets, including an extension of Drury's result \eqref{eq:drury} for the $w_{\rho}$ radii. The last section contains an application of the main result to the joint numerical radius of non-commuting $n$-tuples of operators (a notion introduced in \cite{popescu}). A version of Drury's result is obtained in this non-commutative context.

\section{Spectral sets as numerical radius sets}
The following result, showing the equivalence of $K$-spectral sets and $\tilde{K}$-numerical radius sets, is a counterpart of the result from \cite{Davidson_Paulsen_Woerdeman_2017}. 

\begin{theorem}
\label{ThMainSimplified}
Let $T \in \B{H}$, let $\Omega$ be an open domain in the complex plane and let $K \ge 1$ be a real number. Denote $\tilde{K} = \frac{1}{2} \left( K  + K^{-1} \right)$.  The following assertions are equivalent:
\begin{enumerate}
\item $\Omega $ is a $K$-spectral set for $T$;
\item $\Omega $ is a $\tilde{K}$-numerical radius set for $T$.
\end{enumerate}
\end{theorem}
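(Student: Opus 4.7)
The plan is to reduce both directions to the case $\Omega = \overline{\D}$ by a composition argument, then prove the key operator-theoretic assertion: $\overline{\D}$ is a $K$-spectral set for $A$ if and only if it is a $\tilde K$-numerical radius set for $A$. For the reduction, given $f \in R(\overline{\Omega})$ with $\|f\|_\Omega \le 1$, the map $f$ sends $\Omega$ into $\overline{\D}$, so for any polynomial $p$,
$$
\|p(f(T))\| = \|(p\circ f)(T)\| \le K \|p\circ f\|_\Omega \le K \|p\|_{\D},
$$
and likewise $w(p(f(T))) \le \tilde K \|p\|_{\D}$ under hypothesis (2). Thus $\overline{\D}$ is $K$-spectral (resp.\ $\tilde K$-numerical radius) for $A := f(T)$; applying the reduced key lemma with $p(z) = z$ yields the claim about $f(T)$, and the scaling $f \mapsto f/\|f\|_\Omega$ handles general $f$.

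For the key inequality in direction $(1)\Rightarrow(2)$, the constant $\tilde K = \tfrac{1}{2}(K + K^{-1})$ signals a Schwarz--Pick-type optimization. My approach is to test the $K$-spectral hypothesis against the one-parameter family of Möbius disc automorphisms $\tau_a(z) = (z+a)/(1+\bar a z)$, $|a|<1$, each with $\|\tau_a\|_{\D} = 1$. This yields $\|\tau_a(A)\| = \|(A+aI)(I+\bar a A)^{-1}\| \le K$. Evaluating $\ps{\tau_a(A) x}{x}$ for a unit vector $x$ and manipulating via the decomposition $\tau_a(A) = aI + (1-|a|^2) A(I+\bar a A)^{-1}$ gives an inequality relating $|\ps{Ax}{x}|$, $|a|$, and $K$; optimizing $|a|$ should single out the critical value $|a| = 1/K$ and produce $|\ps{Ax}{x}| \le \tilde K$. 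This is the functional-analytic analogue of the classical Schwarz--Pick relation $|p'(0)| \le 1 - |p(0)|^2$, which by the calculation at $|p(0)| = 1/K$ yields the same optimum $\tilde K$.

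For the converse $(2)\Rightarrow(1)$, I would proceed symmetrically with the same Möbius family but using the numerical-radius hypothesis $w(\tau_a(A)) \le \tilde K$. Inverting the Möbius relation $A = (\tau_a(A) - aI)(I - \bar a \, \tau_a(A))^{-1}$ and choosing $a$ optimally ought to sharpen the trivial bound $\|A\| \le 2 w(A) \le 2\tilde K$ down to $\|A\| \le K$. Applying the same argument to $p(A)$ in place of $A$ then gives the full $K$-spectral inequality $\|p(A)\| \le K\|p\|_{\D}$. Berger's power inequality $w(B^n) \le w(B)^n$ may serve as a useful auxiliary tool, yielding $\|f(T)^n\| \le 2\tilde K \|f\|_\Omega^n$ for all $n$, which can then be refined by the Möbius optimization.

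The main obstacle is extracting the \emph{sharp} constant $\tilde K$ (resp.\ $K$) rather than the trivial bounds $K$ (resp.\ $2\tilde K$). Both trivial bounds are off from the target by a multiplicative factor $(K^2+1)/(2K^2) < 1$, so the improvement must genuinely use the entire family of Möbius test functions and cannot come from a single one. The appearance of $K^{-1}$ in $\tilde K$ is the algebraic signature of the optimization in the Möbius parameter $|a|$; a further delicate point is that the arguments must avoid any appeal to Paulsen's similarity theorem, since Pisier's counter-example prohibits the passage from $K$-spectrality to complete $K$-spectrality.
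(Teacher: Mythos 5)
Your strategy is sound and, in execution, genuinely different from the paper's. The paper works through the Cayley transform $\phi(z)=(1+z)/(1-z)$: it reformulates ``$\Omega$ is $K$-spectral for $T$'' as ``$\Reel (f(T))\ge 0$ for all analytic $f:\Omega\to\mathcal{D}_K$'' and ``$\Omega$ is a $K$-numerical radius set'' as ``$\Reel (f(T))\ge -I$ for the same class of $f$'', and then finishes in three lines by observing that $\mathcal{D}_{\tilde K}=\alpha\,\mathcal{D}_K-1$ for a suitable $\alpha>0$ --- no inner products, no optimization, everything at the level of operator positivity. You instead test the hypothesis against the M\"obius family $\tau_a$ and optimize over $a$; this does work, and the computations you leave as ``should'' and ``ought to'' do close, both at the critical radius $|a|=1/K$ with phase opposite to $\ps{Ax}{x}$. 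Concretely, writing $\beta=\ps{Ax}{x}$ and $N=\|Ax\|^2$ for a unit vector $x$: for $(1)\Rightarrow(2)$ the inequality $\|(A+aI)x\|\le K\|(I+\bar aA)x\|$ with $a=-\beta/(K|\beta|)$ kills the coefficient $1-K^2|a|^2$ of $N$ and gives exactly $2(K^2-1)|\beta|/K\le K^2-K^{-2}$, i.e.\ $|\beta|\le\tilde K$; for $(2)\Rightarrow(1)$ the inequality $|\ps{(A+aI)x}{(I+\bar aA)x}|\le\tilde K\|(I+\bar aA)x\|^2$ with the same $a$ kills the coefficient $1+|a|^2-2\tilde K|a|$ of $|\beta|$ and gives $N\le K^2$ (in the other branch of the absolute value one gets $N\le 2\tilde K^2-1\le K^2$ using $|\beta|\le\tilde K$, which is the case $a=0$). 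Your reduction to the disc is fine since $r\circ f$ keeps its poles off $\overline\Omega$ when $\|f\|_\Omega\le1$, the endpoint $K=1$ needs the same $\varepsilon$-limiting remark the paper makes, and you are right that Paulsen's similarity theorem must be avoided --- both proofs do avoid it. What the paper's route buys is brevity and the fact that it generalizes verbatim to the radii $w_\rho$ (its Theorem 3.2); what yours buys is an elementary, self-contained computation that makes the sharpness of $\tilde K$ and the role of the critical parameter $1/K$ completely explicit. The aside about Berger's power inequality is a dead end and should be dropped.
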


Note first that inverting the formula defining $\tilde{K}$ we obtain
$$ K = \tilde{K} + \sqrt{ \tilde{K}^2 - 1} .$$
Since a spectral set is also $(1+\varepsilon)$-spectral set and $\tilde{K} \to 1$ if and only if $K\to 1$, it is possible to assume, without loss of any generality, that $K > 1$. Denote 
$$\phi(z)=\frac{1+z}{1-z}$$ 
and 
$$\mathcal{D}_K:= \phi(K^{-1}\D)$$ 
the open disc with center $\frac{K^2+1}{K^2-1}$ and radius $\frac{2K}{K^2-1}$. An equivalent description of $\mathcal{D}_K$ is that the endpoints of the interval $\left(\frac{K-1}{K+1}, \frac{K+1}{K-1} \right)$ are diametrically opposed in $\mathcal{D}_K$.

The following lemma reformulates the definition of a $K$-spectral set. Recall first that the real part of an operator $A$ is defined as $\Reel (A) = (A+A^*)/2$. 

\begin{lemma}
The set $\Omega$ is $K$-spectral for $T$ if and only if $\Reel (f(T)) \ge 0 $ for all analytic functions $f:\Omega \rightarrow \mathcal{D}_K$.
\end{lemma}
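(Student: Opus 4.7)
The plan is to use the Cayley transform to translate between contractions and operators with $\Reel\ge 0$, lifting the Möbius bijection $\phi:K^{-1}\D\to\mathcal{D}_K$ to the operator level. The key algebraic identity, which I would verify by multiplying through by $I+A$ and $I+A^*$ and expanding, is
\begin{equation*}
I - BB^* = 2(I+A)^{-1}(A+A^*)(I+A^*)^{-1}
\end{equation*}
whenever $B\in\B{H}$ has $\|B\|<1$ and $A:=(I+B)(I-B)^{-1}$. This identifies the contractivity $\|B\|\le 1$ with the condition $\Reel(A)\ge 0$ on its Cayley transform. Since $-1\notin\overline{\mathcal{D}_K}$, the substitutions $g=(f-1)/(f+1)$ and $f=(1+g)/(1-g)$ set up a bijection between $R(\overline{\Omega})$-functions $f$ with values in $\mathcal{D}_K$ and those $g$ with values in $K^{-1}\D$.

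For the forward direction, I will assume $\Omega$ is $K$-spectral for $T$ and let $f\in R(\overline{\Omega})$ map into $\mathcal{D}_K$. Setting $g:=(f-1)/(f+1)\in R(\overline{\Omega})$, one has $\|g\|_\Omega\le 1/K$. To secure the invertibility of $I-g(T)$, I would first work with $g_s:=sg$ for $s\in(0,1)$: by $K$-spectrality $\|g_s(T)\|\le s<1$, so the Cayley identity applied to $B=g_s(T)$ yields $\Reel(f_s(T))\ge 0$ for $f_s:=(1+g_s)/(1-g_s)\in R(\overline{\Omega})$. A direct estimate using $|1-g|,|1-sg|\ge 1-1/K>0$ on $\overline{\Omega}$ shows $f_s\to f$ uniformly as $s\to 1^-$; $K$-spectrality then gives $f_s(T)\to f(T)$ in operator norm, and $\Reel(f(T))\ge 0$ passes to the limit. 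A general analytic $f:\Omega\to\mathcal{D}_K$ is handled by exhausting $\Omega$ with relatively compact subdomains and invoking Runge-type approximation on each.

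For the converse, I will assume the real-part condition and fix $h\in R(\overline{\Omega})$. A small scaling lets me assume $\|h\|_\Omega<1$; then $K-h$ is non-vanishing on $\overline{\Omega}$, so $f:=(K+h)/(K-h)=\phi(h/K)$ lies in $R(\overline{\Omega})$ and maps $\Omega$ into $\mathcal{D}_K$. By hypothesis $\Reel(f(T))\ge 0$, which makes $I+f(T)$ bounded below (and so invertible). The algebraic identity $(f-1)/(f+1)=h/K$ in $R(\overline{\Omega})$ transports to the operator level as
\begin{equation*}
(f(T)-I)(f(T)+I)^{-1}=K^{-1}h(T),
\end{equation*}
and the Cayley identity read in reverse then yields $\|K^{-1}h(T)\|\le 1$, i.e.\ $\|h(T)\|\le K$. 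Homogeneity and letting $\|h\|_\Omega\to 1^-$ deliver the $K$-spectral inequality in full. The main subtle point in both directions is securing the relevant invertibility ($I-g(T)$ forward, $I+f(T)$ converse) and then cleanly passing to the limit; the shrink $s\to 1^-$ takes care of the former and the strict positivity of $\Reel(f(T))$ of the latter.
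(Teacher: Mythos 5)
Your proof is correct and follows essentially the same route as the paper: the M\"obius correspondence $f=\phi(h)$ between maps into $K^{-1}\D$ and maps into $\mathcal{D}_K$, combined with the congruence identity for $I-BB^*$ in terms of $\Reel$ of the Cayley transform, which is exactly the paper's identity $\Reel f(T)=(I-h(T)^*)^{-1}(I-h(T)^*h(T))(I-h(T))^{-1}$ read in the reverse direction. The only difference is that you supply the invertibility and limiting arguments (the shrink $s\to 1^-$ and the scaling of $h$) that the paper leaves implicit.
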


\begin{proof}
 Note that the map $h\mapsto f=\phi\circ h$ realises a one to one correspondence between the analytic functions  $f:\Omega \rightarrow \mathcal{D}_K$ and 
$h:\Omega \rightarrow K^{-1}\D$. Clearly, $\Omega$ is a $K$-spectral set if and only if for all such functions $h$ one has $\|h(T)\|\leq 1$. Equivalently, this holds if and only if $I{-}h(T)^*h(T)\geq 0$. Hereafter $I$ is the identity operator. Note that 
\begin{align*}
\Reel f(T) &= \Reel \left((I{+}h(T))(I{-}h(T))^{-1} \right)\\
           &= (I{-}h(T)^*)^{-1}(I{-}h(T)^*h(T))(I{-}h(T))^{-1}.
\end{align*}
Therefore $\Reel (f(T)) \ge 0 $ if and only if $I{-}h(T)^*h(T)\geq 0$. 
\end{proof}
The analogue result for numerical radius spectral sets reads as follows.
\begin{lemma} 
The set $\Omega$ is a $K$-numerical radius set for $T$ if and only if $\Reel (f(T)) \ge -I $
for all analytic functions $f:\Omega \rightarrow \mathcal{D}_K$.
\end{lemma}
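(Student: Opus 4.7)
The plan is to mirror the parametrization used in the previous lemma: every analytic $f:\Omega \to \mathcal{D}_K$ can be written uniquely as $f = \phi \circ h$ with $h:\Omega \to K^{-1}\D$, so the condition $\Reel(f(T)) \ge -I$ will be translated into a condition on $h(T)$, which in turn will be recognised as a rotational reformulation of $\NumRay{h(T)} \le 1$.

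For the algebraic step I will use the identity $f(T) + I = 2(I{-}h(T))^{-1}$, together with its adjoint $f(T)^* + I = 2(I{-}h(T)^*)^{-1}$. Adding these and factoring out the invertible blocks on the left and right yields
\begin{equation*}
\Reel(f(T)) + I \;=\; 2\,(I{-}h(T)^*)^{-1}\bigl(I - \Reel(h(T))\bigr)(I{-}h(T))^{-1}.
\end{equation*}
Since $(I{-}h(T))^{-1}$ is invertible, positivity of the left-hand side is equivalent to $\Reel(h(T)) \le I$. Hence the condition of the lemma reduces to: $\Reel(h(T)) \le I$ for every analytic $h:\Omega \to K^{-1}\D$.

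The last step uses the rotational invariance of the disc $K^{-1}\D$: for each $\theta \in \R$, the map $h \mapsto e^{i\theta}h$ preserves the class $\{h:\Omega \to K^{-1}\D\}$. Therefore $\Reel(h(T)) \le I$ for every such $h$ is equivalent to $\Reel(e^{i\theta}h(T)) \le I$ for every such $h$ and every $\theta$, i.e.\ to $\NumRay{h(T)} \le 1$ for every $h:\Omega \to K^{-1}\D$. Setting $g = Kh$, this is $\NumRay{g(T)} \le K\,\|g\|_\Omega$ for every analytic $g:\Omega\to\C$, which is precisely the definition of $\Omega$ being a $K$-numerical radius set for $T$ (the passage between bounded analytic functions on $\Omega$ and elements of $R(\overline{\Omega})$ is handled exactly as in the previous lemma). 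The only step requiring a touch of care is this rotational-invariance trick which converts a one-sided real-part bound into a numerical-radius bound; the rest is routine algebra identical in spirit to the previous lemma.
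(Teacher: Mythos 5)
Your proof is correct and takes essentially the same route as the paper's: the same substitution $f=\phi\circ h$, the same resolvent identity (your explicit factorization $\Reel(f(T))+I=2(I{-}h(T)^*)^{-1}(I-\Reel(h(T)))(I{-}h(T))^{-1}$ is just the unpacked form of the paper's step $\Reel(f(T){+}I)=2\,\Reel\big((I{-}h(T))^{-1}\big)\ge 0 \iff \Reel(I{-}h(T))\ge 0$), and the same rotational-invariance trick turning the one-sided bound $\Reel(h(T))\le I$ into $w(h(T))\le 1$. The only difference is the order in which the two equivalences are applied, which is immaterial.
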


\begin{proof}
The functions $f$ and $h$ have the same meaning as in the previous proof. Notice that $\Omega$ is a $K$-numerical radius set for $T$ if and only if $w(h(T))\leq 1$ for all analytic functions $h$ bounded by $K^{-1}$. This is equivalent to say that $\Reel (I {-}h(T))\geq 0$ for all such $h$. Indeed, the statement 
$$h \textrm{ bounded by } K^{-1}$$
is equivalent to 
$$\zeta \,h \textrm{ bounded by } K^{-1} \textrm{ for every } \zeta \in \T .$$
Then we remark that 
\begin{align*}
\Reel (f(T){+}I)=2\,\Reel (I{-}h(T))^{-1}.
\end{align*}
Therefore $\Reel (f(T){+}I) \ge 0 $ if and only if $\Reel (I{-}h(T))\geq 0$. 
\end{proof}

Now we are ready to prove the announced result.

\begin{proof}[Proof of Theorem \ref{ThMainSimplified}]
  Let $K>1$. Denote $\tilde{K} = \frac{1}{2} (K + K^{-1}) $ and $\alpha = \frac{2\tilde{K}( K-1)}{(\tilde{K}-1)(K+1)}$.
  Then $\alpha > 0 $ and $\mathcal{D}_{\tilde{K}} = \alpha \mathcal{D}_K - 1$. It is easy to see that $f$ maps $\Omega$ into $\mathcal D_{K}$ if and only if $g:=\alpha\,f{-}1$ maps $\Omega$ into $\mathcal D_{\tilde{K}}$.
The condition $\Reel f(T)\geq 0$ is clearly equivalent to $\Reel (g(T))\geq -I$.
The theorem then follows from the two previous lemmata.
\end{proof}

\section{Operator radii of homomorphisms into $\B{H}$}
We start by introducing the definitions, notations and prior results that we shall need.
  
 \subsection*{Banach algebras satisfying von Neumann inequality}
A unital complex Banach algebra $\mathcal{A} $ is said to satisfy the \emph{von Neumann inequality} if for every $p \in \C[z]$ and every $a \in \mathcal{A}$ such that $\norme{a} \le 1 $, we have $\norme{p(a)} \le \sup \lbrace \abs{p(z)} : z \in \D \rbrace$. 
Here $p(a)$ is defined by the usual polynomial functional calculus in a unital Banach algebra. It follows from the von Neumann inequality for $\B{H}$ that every Banach algebra
which is an operator algebra, (i.e. which is isometrically isomorphic to a closed, not necessarily
self-adjoint, subalgebra of $\B{H}$), also satisfies the von Neumann inequality. The converse question (is it true that every unital Banach algebra which satisfies the von Neumann inequality is an operator algebra ?) is an open question. A non-unital Banach algebra satisfying the von Neumann inequality for polynomials in a
single variable, without constant term, and which is not (isomorphic to) an operator algebra can be found in  \cite{Dixon}.

\subsection*{Operator radii $w_\rho$} 
The class of operators ${\mathcal C}_\rho$ defined below has been introduced by
Sz.-Nagy  and Foia\c{s} (see \cite[Chapter 1]{Nagy_Foias} and the references therein). 
The operator $T\in \B{H}$ is said to be in the class ${\mathcal C}_\rho$ of operators 
admitting unitary $\rho$-dilations if there exists a larger space ${\mathcal H}\supset H$ and 
a unitary operator $U\in  \B{\mathcal H})$ such that 
\begin{equation*}
T^n=\rho P U^nP^*,\qquad \hbox{for } n=1,2,\dots .
\end{equation*}
Here $P$ denotes the orthogonal projection from ${\mathcal H}$ onto $H$. 
Then the operator $\rho$-radius of an operator $A$ is defined by
\begin{equation*}
w_\rho(A)=\inf\{\lambda>0\, : \lambda^{-1}A\in {\mathcal C}_\rho\}. 
\end{equation*}
From this definition it follows that $r(A)\leq w_\rho(A)\leq \rho \|A\|$, 
where $r(A)$ denotes the spectral radius of $A$. Also, $w_\rho(A)$ 
is a non-increasing function of $\rho$ and 
$$\lim_{\rho\to\infty} w_\rho(A) = r(A) .$$ 
Another equivalent definition follows from \cite[Theorem 11.1]{Nagy_Foias}:
\begin{align*}
w_\rho(A)&=\sup_{h\in \mathcal E_\rho}\Big\{(1\!-\!\tfrac1\rho)\,|\langle Ah,h\rangle|+\sqrt
{(1\!-\!\tfrac1\rho)^2|\langle Ah,h\rangle|^2+(\tfrac2\rho{-}1)\,\| Ah\|^2}\Big\},\quad\hbox{with}\\
 \mathcal E_\rho&=\{h\in H\,: \|h\|=1\ {\rm and } 
\  (1\!-\!\tfrac1\rho)^2|\langle Ah,h\rangle|^2
 -(1\!-\!\tfrac2\rho)\,\| Ah\|^2\geq0\}.
\end{align*}
Notice that $ \mathcal E_\rho=\{h\in H\, : \|h\|=1\}$ whenever $1\leq \rho\leq 2$. 
This shows that $w_1(A)=\|A\|$,
$w_2(A)=w(A)$ and $w_\rho(A)$ is a convex function of $A$ if $1\leq \rho\leq 2$. 

We shall need later on the following alternate characterisation of the condition $\NumRho{\rho}(T) \le 1$. 
\begin{proposition}
\label{ThKSpectrNumRho}
Let $T \in \B{H}$ with $\Spec{T} \subset \D$ and let $\rho \ge 1 $.  
We have $\NumRho{\rho}(T) \le 1$
if and only if
\begin{equation}\label{eq:poisson}
\Reel\big((I{-}\zeta T)^{-1}(I{+}\zeta T)\big) \ge (1- \rho)I
\end{equation}
for every $\zeta \in \T$.
\end{proposition}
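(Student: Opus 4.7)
The plan is to identify condition \eqref{eq:poisson} with the classical Sz.-Nagy--Foias characterization of membership in $\mathcal C_\rho$: namely, $T \in \mathcal C_\rho$ if and only if $\Reel(I-zT)^{-1} \ge (1 - \tfrac{\rho}{2}) I$ for every $z \in \D$ (see \cite[Ch.~I, \S11]{Nagy_Foias}). Using the identity $(I-\zeta T)^{-1}(I+\zeta T) = 2(I-\zeta T)^{-1} - I$, condition \eqref{eq:poisson} rewrites as $\Reel(I-\zeta T)^{-1} \ge (1 - \rho/2)I$ for $\zeta \in \T$. Since $\Spec{T} \subset \D$, for each $h \in H$ the scalar map $z \mapsto \ps{\Reel(I-zT)^{-1}h}{h}$ is harmonic on $\D$ and continuous on $\adherence{\D}$, so the minimum principle makes the boundary inequality on $\T$ equivalent to its analogue on $\D$; the proposition then reduces to proving the Sz.-Nagy--Foias equivalence above.

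For the forward direction, I would assume $w_\rho(T)\le 1$ and pick a unitary $\rho$-dilation $U \in \B{\mathcal H}$ of $T$. Summing the geometric series and inserting $T^n = \rho P U^n P^*$ for $n \ge 1$ (with $PP^* = I_H$) gives
$$(I-zT)^{-1} = (1-\rho)I + \rho P(I-zU)^{-1}P^*, \qquad |z|<1.$$
Since $U$ is unitary, positivity of the Poisson kernel yields $\Reel(I-zU)^{-1} \ge \tfrac{1}{2}\, I_{\mathcal H}$ on $\D$, and compressing back to $H$ gives $\Reel(I-zT)^{-1} \ge (1-\rho/2)I$, as required.

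For the converse, I would set
$$F(z) := \Big(1 - \tfrac{1}{\rho}\Big)I + \tfrac{1}{\rho}(I-zT)^{-1}(I+zT),$$
which satisfies $F(0) = I$, is holomorphic on $\D$, and has $\Reel F \ge 0$ on $\D$ by the reformulated hypothesis. The operator-valued Herglotz representation then provides a positive operator-valued measure $E$ on $\T$ with $E(\T) = I$ and $F(z) = \int_\T \frac{\zeta+z}{\zeta-z}\, dE(\zeta)$. Naimark's theorem dilates $E$ to a spectral measure $E'$ on some Hilbert space $\mathcal H \supset H$ with $E(\cdot) = PE'(\cdot)P^*$, and $U := \int_\T \zeta\, dE'(\zeta)$ is unitary on $\mathcal H$. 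Matching Taylor coefficients at $0$ in the expansions $F(z) = I + \tfrac{2}{\rho}\sum_{n\ge 1} z^n T^n$ and $\int_\T \tfrac{\zeta+z}{\zeta-z}\,dE(\zeta) = I + 2\sum_{n\ge 1} z^n P(U^*)^n P^*$ yields $T^n = \rho P(U^*)^n P^*$ for every $n \ge 1$. Since $U^*$ is unitary, $T \in \mathcal C_\rho$, i.e.\ $w_\rho(T) \le 1$.

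The main obstacle is the converse: extracting an honest unitary dilation out of the Poisson-type inequality \eqref{eq:poisson}. The forward implication reduces to a geometric-series calculation combined with positivity of the Poisson kernel, whereas the converse requires the operator-valued Herglotz representation together with Naimark's dilation theorem, plus careful bookkeeping of the three constants $\rho$, $1-\rho$ and $1-\rho/2$ that mediate between the $(I+\zeta T)(I-\zeta T)^{-1}$ formulation and the Sz.-Nagy--Foias formulation involving $(I-zT)^{-1}$ alone.
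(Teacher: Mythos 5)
Your proof is correct, and its opening move is exactly the paper's: rewrite $\Reel\big((I{-}\zeta T)^{-1}(I{+}\zeta T)\big)$ as $2\,\Reel(I{-}\zeta T)^{-1}-I$ (the paper writes the equivalent form $(I{-}\zeta T)^{-1}+(I{-}\conj{\zeta}T^*)^{-1}-I$, the ``Cauchy versus Poisson kernel'' identity), so that \eqref{eq:poisson} becomes $\Reel(I{-}\zeta T)^{-1}\ge (1-\tfrac{\rho}{2})I$. Where you diverge is that the paper stops there and cites Cassier--Fack (p.~315) for the equivalence of this positivity condition with $\NumRho{\rho}(T)\le 1$, whereas you prove that equivalence from scratch: the forward direction by summing the geometric series through the dilation $T^n=\rho PU^nP^*$ and using $\Reel(I{-}zU)^{-1}\ge\tfrac12 I$ for unitary $U$, and the converse by normalizing to the Herglotz function $F(z)=(1-\tfrac1\rho)I+\tfrac1\rho(I{-}zT)^{-1}(I{+}zT)$ with $F(0)=I$ and $\Reel F\ge 0$, then invoking the operator-valued Herglotz representation and Naimark dilation and matching Taylor coefficients to recover $T^n=\rho P(U^*)^nP^*$. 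Both routes are sound; yours is longer but self-contained, and it has the additional merit of explicitly handling the passage between the boundary condition on $\T$ (as stated in the proposition) and the interior condition on $\D$ (as in the Sz.-Nagy--Foia\c{s}/Cassier--Fack formulation) via harmonicity of $z\mapsto\Reel\ps{(I{-}zT)^{-1}h}{h}$ on a neighbourhood of $\adherence{\D}$ and the minimum principle --- a point the paper leaves implicit in the citation, and the one place where the hypothesis $\Spec{T}\subset\D$ is genuinely used.
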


\begin{proof}
The condition $\Spec{T} \subset \D$ assures that all operators below are well-defined. Using the relation 
$$ \Reel\big((I{-}\zeta T)^{-1}(I{+}\zeta T)\big) = \left( I-\zeta T\right)^{-1} +  \left( I-\conj{\zeta} T^*\right)^{-1} - I,$$
reminiscent of the relation between the Cauchy and Poisson kernels, the condition of Proposition \ref{ThKSpectrNumRho} says that 
$$ \left( I-\zeta T\right)^{-1} +  \left( I-\conj{\zeta} T^*\right)^{-1} - I \ge (1- \rho)I $$
for every $\zeta \in \T$. The proof follows now from \cite[p. 315]{CassierFack}.
\end{proof}

\subsection*{Operator radii of homomorphisms}
In this subsection $\omega$ stands for one of the operator radii $w_{\rho}$, $\rho\ge 1$.
If $\Phi:  \mathcal{A} \to \B{H}$ is a bounded linear map and $\omega$ is an operator radius we 
denote by 
$$ \|\Phi\|_{\omega} := \sup \{ \omega(\Phi(a)) : a \in  \mathcal{A}, \|a\|\le 1\}$$
the \emph{operator radius} of $\Phi$. In the case when $\omega = w_1$ is the classical norm we simply write $\|\Phi\|$.

Suppose now that $\mathcal{A}$ is a unital operator algebra. Then $\Matrice_n(\mathcal{A})$, the set of $n\times n$ matrices with entries in $\mathcal{A}$, has a canonical family of norms and $\mathcal{A}$ may be represented completely isometrically as an algebra of operators on some Hilbert space. See \cite{PaulsenBook} for details. If $\Phi:\mathcal{A} \to \B{H}$ is a bounded linear map, it induces coordinatewise maps 
$$\Phi^{(n)}: \Matrice_n(\mathcal{A}) \to \Matrice_n(\B{H}) \simeq \B{H^{(n)}} .$$ 
Here $ \Matrice_n(\B{H})$ is identified with the set of bounded linear operators acting on $H^{(n)}$, the $\ell_2$-sum of $n$ copies of $H$. One defines the \emph{completely bounded norm} by
$$\|\Phi\|_{cb} = \sup_{n\ge1} \|\Phi^{(n)}\| .$$
We also introduce the \emph{complete operator radius} as
$$ \|\Phi\|_{\omega cb} := \sup_{n\ge1}\,  \sup \, \{ \omega(\Phi^{(n)}(A)) : A \in \Matrice_n(\mathcal{A}),\ \|A\|\le 1\}.$$

These notions have been introduced in \cite{Davidson_Paulsen_Woerdeman_2017} in the case when $\omega = w_2$ is the numerical radius.

  \subsection*{Main result} The following result, which is the main result of this paper, generalizes Theorem \ref{ThMainSimplified} and the result from \cite{Davidson_Paulsen_Woerdeman_2017} to the larger class of operator radii $w_{\rho}$.
  
\begin{theorem}
\label{ThMainTh}
We assume $K\geq 1$ and $\rho \geq 1$ and set
\[
  \widetilde{K} =\frac{K^2+1+\sqrt{(K^2+1)^2-4\rho (2-\rho )K^2}}{2\rho K}.
\]
Then,\\
(a) \quad $\|\Phi\| = K$ if and only if $\|\Phi\|_{w_{\rho}} = \widetilde{K}$ for every unital homomorphism $\Phi: \mathcal{A} \to \B{H}$ from a Banach algebra $\mathcal{A}$ satisfying the von Neumann inequality ;\\
(b) \quad $\|\Phi\|_{cb} = K$ if and only if $\|\Phi\|_{w_{\rho}cb} = \widetilde{K}$ for every unital homomorphism $\Phi: \mathcal{A} \to \B{H}$ from an operator algebra $\mathcal{A}$.
\end{theorem}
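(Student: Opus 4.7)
The plan is to reduce Theorem \ref{ThMainTh}(a) to a single-operator statement of the same flavour as Theorem \ref{ThMainSimplified}, and then to run the Cayley-transform argument of Section~2 with Proposition \ref{ThKSpectrNumRho} replacing the elementary characterization of $w(T)\le 1$. For the reduction, I would first observe that $\|\Phi\|\le K$ is equivalent to the assertion that $\D$ is a $K$-spectral set for $T := \Phi(a)$ for every $a\in\mathcal{A}$ with $\|a\|\le 1$: if $\sup_{\D}|p|\le 1$, the von Neumann inequality in $\mathcal{A}$ gives $\|p(a)\|\le 1$, so $\|p(T)\|=\|\Phi(p(a))\|\le K$, while the converse follows by taking $p(z)=z$. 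The same argument with $w_\rho$ in place of the norm shows $\|\Phi\|_{w_\rho}\le\widetilde K$ is equivalent to $\D$ being a $\widetilde K$-$w_\rho$-set for every such $\Phi(a)$. Part (a) therefore reduces to the stronger single-operator statement: for every $T\in\B{H}$ and every open $\Omega\subset\C$, $\Omega$ is $K$-spectral for $T$ if and only if it is a $\widetilde K$-$w_\rho$-set for $T$.

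The first lemma of Section~2 already reformulates $K$-spectrality as $\Reel f(T)\ge 0$ for every analytic $f:\Omega\to\mathcal{D}_K$. I would next prove its $w_\rho$-counterpart: $\Omega$ is a $\widetilde K$-$w_\rho$-set for $T$ iff $\Reel \tilde f(T)\ge (1-\rho)I$ for every analytic $\tilde f:\Omega\to\mathcal{D}_{\widetilde K}$. This mimics the second lemma of Section~2: the $\widetilde K$-$w_\rho$ condition reads $w_\rho(h(T))\le 1$ for every $h:\Omega\to\widetilde K^{-1}\D$; Proposition \ref{ThKSpectrNumRho} rewrites it as $\Reel\bigl((I{-}\zeta h(T))^{-1}(I{+}\zeta h(T))\bigr)\ge (1{-}\rho)I$ for every $\zeta\in\T$; the rotation $h\mapsto \zeta h$ lets one drop $\zeta$; and the substitution $\tilde f = \phi\circ h$ with $\phi(z)=(1{+}z)/(1{-}z)$ completes the translation, since $\phi$ maps $\widetilde K^{-1}\D$ bijectively onto $\mathcal{D}_{\widetilde K}$.

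The substantive step is then to match the two disc-valued conditions by an affine bijection. I would seek $\alpha>0$ such that $\tilde f\mapsto f := \alpha\tilde f + \alpha(\rho-1)$ is a bijection between analytic maps $\Omega\to\mathcal{D}_{\widetilde K}$ and analytic maps $\Omega\to\mathcal{D}_K$; then $\Reel f(T)=\alpha\Reel\tilde f(T) + \alpha(\rho-1)I$ converts $\Reel\tilde f(T)\ge(1{-}\rho)I$ into $\Reel f(T)\ge 0$. Equating centers and radii in $\alpha\mathcal{D}_{\widetilde K} + \alpha(\rho-1)=\mathcal{D}_K$ (using that $\mathcal{D}_K$ has center $(K^2{+}1)/(K^2{-}1)$ and radius $2K/(K^2{-}1)$) collapses to the quadratic
\[
\rho K \widetilde K^2 - (K^2+1)\widetilde K + (2-\rho)K = 0,
\]
whose larger root is precisely the $\widetilde K$ of the theorem; the cases $\rho=1$ ($\widetilde K=K$) and $\rho=2$ ($\widetilde K=\tfrac12(K{+}K^{-1})$) recover the trivial case and the constant of Theorem \ref{ThMainSimplified} respectively. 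Identifying $\widetilde K$ from this quadratic is the only step with any computational content, and is the main obstacle.

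For part (b), I would apply (a) to every amplification $\Phi^{(n)}:\Matrice_n(\mathcal{A})\to\B{H^{(n)}}$: since $\mathcal{A}$ is an operator algebra, so is $\Matrice_n(\mathcal{A})$, hence it satisfies the von Neumann inequality and falls under (a), yielding $\|\Phi^{(n)}\|=K_n$ iff $\|\Phi^{(n)}\|_{w_\rho}=\widetilde{K_n}$. A routine check on the quadratic shows that $K\mapsto\widetilde K$ is continuous and strictly increasing on $[1,\infty)$, so it commutes with $\sup_n$, giving $\|\Phi\|_{cb}=K \Leftrightarrow \|\Phi\|_{w_\rho cb}=\widetilde K$. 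The iff on equalities (as opposed to $\le$) follows from the same monotonicity.
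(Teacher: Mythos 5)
Your proposal is correct, and its mathematical core coincides with the paper's: Proposition \ref{ThKSpectrNumRho} as the half-plane characterisation of $\NumRho{\rho}(\cdot)\le 1$, the rotation trick to absorb the parameter $\zeta\in\T$, and an affine map matching $\mathcal{D}_{\widetilde K}$ with $\mathcal{D}_K$ whose centre/radius conditions collapse to the quadratic $\rho K\widetilde K^2-(K^2{+}1)\widetilde K+(2{-}\rho)K=0$ — this last point is exactly the paper's Lemma \ref{LemDisqueDK}, up to replacing the homothety by its inverse. The one genuine difference is organisational. The paper argues directly at the level of the homomorphism $\Phi$ and the unit ball $\mathcal{B}$ of $\mathcal{A}$, and the step that lets it pass from the $\varphi_K$-condition to the $\varphi_{\widetilde K}$-condition is the invariance $b(\mathcal{B})=\mathcal{B}$ under disc automorphisms $b$ (a consequence of the von Neumann inequality); the single-operator statement (Theorem \ref{ThSpectalSetKtoKtilde}) is then a corollary. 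You invert this order: the von Neumann inequality is used once, in the reduction $\|\Phi\|\le K\Leftrightarrow$ ($\D$ is $K$-spectral for $\Phi(a)$ for every $a\in\mathcal{B}$), and the rest is the Section~2 argument run at the single-operator level, where the automorphism-invariance becomes the trivial fact that postcomposing with a biholomorphism of the target discs is a bijection on analytic maps. Both routes are of comparable length and rely on the same computation; yours makes the single-operator theorem primary, at the cost of inheriting (not worsening) the paper's own mild conflation of rational and analytic functional calculi in Section~2. Your treatment of part (b) via the amplifications $\Phi^{(n)}$, and the passage from the proved inequalities $\|\Phi\|\le K\Leftrightarrow\|\Phi\|_{w_\rho}\le\widetilde K$ to the stated equalities via continuity and strict monotonicity of $K\mapsto\widetilde K$, matches what the paper does (the latter point being left implicit there).
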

 
\begin{remark}
Note that for $\rho =2$ the new notation becomes $ \widetilde{K} =\frac12(K{+}K^{-1})$ and coincides with the notation of Section\,2. Note also that $(K^2+1)^2-4\rho (2-\rho )K^2\geq 0$, thus $\widetilde{K} $ is real. It is easy to verify that $1\leq \widetilde{K} \leq K$, with strict inequalities if $K>1$ and $\rho >1$.
\end{remark}

\section{Proof of Theorem \ref{ThMainTh} }
As in the proof of Theorem \ref{ThMainSimplified} we may assume $K>1$. 
\begin{lemma}\label{LemDisqueDK}
Suppose $K>1$. The homothety defined by
\[
\phi_{\alpha,\rho }(z)=\alpha\,z+(1{-}\rho ), \quad\text{with }\alpha=\frac{\widetilde{K}}{\widetilde{K}^2-1}\,\frac{K^2-1}{K},
\]  
realises a biholomorphism from $\mathcal{D}_K $ onto $\mathcal{D}_{\widetilde{K}}$.
\end{lemma}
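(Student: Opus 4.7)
The plan is to verify directly that the affine map $\phi_{\alpha,\rho}$, being a real homothety followed by a real translation, sends the disc $\mathcal{D}_K$ to the disc $\mathcal{D}_{\widetilde K}$, by matching centers and radii. Recall from Section~2 that (for $K>1$) $\mathcal{D}_K$ is the open disc with center $c_K=\frac{K^2+1}{K^2-1}$ and radius $r_K=\frac{2K}{K^2-1}$; the same formulas with $\widetilde{K}$ in place of $K$ describe $\mathcal{D}_{\widetilde K}$, which makes sense since $\widetilde{K}>1$ when $K>1$ by the preceding remark.

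First I would note that $\alpha>0$, so $\phi_{\alpha,\rho}$ is an affine bijection of $\C$ mapping discs to discs; the image of $\mathcal{D}_K$ is the disc with center $\alpha c_K+(1-\rho)$ and radius $\alpha r_K$. The radius identification
\[
\alpha\,r_K \;=\; \frac{\widetilde{K}}{\widetilde{K}^2-1}\,\frac{K^2-1}{K}\cdot\frac{2K}{K^2-1} \;=\; \frac{2\widetilde{K}}{\widetilde{K}^2-1} \;=\; r_{\widetilde K}
\]
is immediate from the definition of $\alpha$.

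The substantive step is to verify $\alpha c_K+(1-\rho)=c_{\widetilde K}$, i.e.
\[
\frac{\widetilde{K}(K^2+1)}{K(\widetilde{K}^2-1)}+(1-\rho) \;=\; \frac{\widetilde{K}^2+1}{\widetilde{K}^2-1}.
\]
Clearing denominators by multiplying through by $K(\widetilde{K}^2-1)$ and collecting terms, this identity is equivalent to the quadratic equation
\[
\rho K\,\widetilde{K}^2 \;-\; (K^2+1)\,\widetilde{K} \;+\; (2-\rho)K \;=\; 0.
\]
Solving this quadratic in $\widetilde{K}$ yields exactly
\[
\widetilde{K} \;=\; \frac{(K^2+1)\pm\sqrt{(K^2+1)^2-4\rho(2-\rho)K^2}}{2\rho K},
\]
and the root with the plus sign is precisely the value of $\widetilde{K}$ fixed in Theorem \ref{ThMainTh}. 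Hence the center equation holds, completing the proof.

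The only nontrivial obstacle here is the algebraic manipulation of the center identity, but since $\widetilde{K}$ was defined to be the positive root of that very quadratic, the verification is tautological once one brings the equation into the correct form. I would conclude by remarking that as a sanity check, $\rho=1$ gives $\widetilde{K}=K$ and $\alpha=1$ (so $\phi_{\alpha,1}$ is the identity), while $\rho=2$ recovers the homothety from the proof of Theorem \ref{ThMainSimplified}.
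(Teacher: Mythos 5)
Your proof is correct and follows essentially the same route as the paper's: match the radius (immediate from the definition of $\alpha$) and reduce the center identity to the quadratic $\rho K\widetilde{K}^2-(K^2+1)\widetilde{K}+(2-\rho)K=0$, which $\widetilde{K}$ satisfies by construction. The sanity checks at $\rho=1$ and $\rho=2$ are a nice addition but not needed.
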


\begin{proof}
Recall that  $\mathcal{D}_K $ is the disc centered in $\omega_K=\frac{K^2+1}{K^2-1}$ with radius $r_K=\frac{2K}{K^2-1}$. We remark that $r_{\widetilde K}=\alpha\, r_K$. Thus it suffices to show that $\phi_{\alpha,\rho }(\omega_K )=\omega_{\widetilde K}$, i.e. 
\[
\frac{\widetilde{K}}{\widetilde{K}^2-1}\,\frac{K^2+1}{K}+1-\rho =\frac{\widetilde{K}^2+1}{\widetilde{K}^2-1},
\]
which is equivalent to
\[
\rho\, K\,\widetilde K^2-(K^2{+}1)\widetilde K +(2{-}\rho )K=0.
\]
Our choice of $\widetilde K$ clearly satisfies this equation.
\end{proof}

Let us introduce the function
$$
\varphi_K(z) = \varphi(\frac{z}{K}) = \frac{1 + \frac{z}{K}}{1 - \frac{z}{K}} .
$$
Note that $\varphi_K $ is a biholomorphism from $\D$ onto $\mathcal{D}_K $.

The closed unit ball of the Banach algebra $\mathcal{A}$ is denoted from now on as $\mathcal{B}$. It is easy to see that if $\mathcal{A} $ satisfies von Neumann inequality, then for every automorphism of the disc, $b$, we have $b(\mathcal{B}) = \mathcal{B}$. We denote by $\sigma_{\mathcal{A}}(f)$ the spectrum of the element $f\in \mathcal{A}$.  

\begin{proof}[Proof of Theorem \ref{ThMainTh}]

(a) \quad Recall that we suppose $K > 1$. Let $\phi_{\alpha,\rho} $ as in Lemma \ref{LemDisqueDK} and note that $\alpha > 0$. Using the inclusion
$$ \sigma_{\B{H}}(\Phi(f)) \subset \sigma_{\mathcal{A}}(f)$$
we obtain 
$$\sigma_{\B{H}}(\Phi(f)) \subset \adherence{\D} $$
for every $f\in \mathcal{B}$. Since $\varphi_K$ is a rational function with one pole outside $\adherence{\D}$, the operator 
$\varphi_K(\Phi(f))$ is well-defined. Similar arguments show that all operators below are also well-defined.

Using Proposition \ref{ThKSpectrNumRho} with $\rho = 1$, we see that $\|\Phi\| \le K$ is equivalent to the assertion that 
$$
\textrm{ for every } f\in\mathcal{B} 
 \textrm{ and every }  \zeta \in \T
\textrm{ we have }
\Reel(\varphi_K(\zeta \Phi(f))) \ge 0 .
$$ 
Note that $f \in \mathcal{B}$ if and only if $\zeta f \in \mathcal{B}$. 
Therefore the latter is equivalent to the assertion that
$$\textrm{ for every } f \in \mathcal{B} \textrm{ we have } 
\Reel(\varphi_K( \Phi(f))) \ge 0.
$$ 
This is equivalent to
$$
\Reel(\phi_{\alpha,\rho}(\varphi_K( \Phi(f)))) = \Reel(\alpha \varphi_K( \Phi(f)) + (1- \rho)I) \ge (1- \rho)I.
$$
Consider now the automorphism of the unit disc given by 
$$b=\varphi_{\widetilde{K}}^{-1} \circ \phi_{\alpha,\rho} \circ \varphi_K .$$
We obtain
$$
\Reel(\phi_{\alpha,\rho}(\varphi_K( \Phi(f)))) = \Reel(\varphi_{\widetilde{K}}(b (\Phi(f))) = \Reel(\varphi_{\widetilde{K}}(\Phi(b(f))) .
$$
As $b(\mathcal{B}) = \mathcal{B}$, this is equivalent to the assertion that
$$\textrm{ for every } g \in \mathcal{B},
\textrm{ we have }
\Reel(\varphi_{\widetilde{K}}(\Phi(g)) \ge (1- \rho)I.
$$
Note that $g  \in \mathcal{B}$ if and only if for every $\zeta \in \T $, $\zeta g \in \mathcal{B}$.
Therefore the latter is equivalent to the assertion that 
$$
\textrm{ for every } g \in \mathcal{B} \textrm{ and
for every } \zeta \in \T
\textrm{ we have }
\Reel(\varphi_{\widetilde{K}}(\zeta\Phi(g)) \ge (1- \rho)I.
$$
Using Proposition \ref{ThKSpectrNumRho}, we see that this is equivalent to the assertion that
$$
\textrm{ for every } g \in  \mathcal{B}, \textrm{ we have }
\NumRho{\rho}(\Phi(g)) \le \widetilde{K}.
$$
This finishes the proof of (a). 

The proof of (b), for complete bounded norms of homomorphisms of operator algebras, follows from (a) and the fact that all matrix algebras $\Matrice_n(\mathcal{A})$ satisfies in this case the von Neumann inequality. 
\end{proof}

\section{Applications to spectral sets}

\subsection*{$K$-spectral sets}
Let $\Omega \subset \C$ be an open domain and let 
$T \in \B{H}$ with $\Spec{T} \subset \overline{\Omega}$.
Let $\mathcal{A} = R(\overline{\Omega})$ be the uniform closure in C$(\overline{\Omega})$ of rational functions with poles off of  $\overline{\Omega}$. As a subalgebra of a (commutative) C$^*$-algebra, $\mathcal{A} $ is an operator algebra. 
Consider now the map $\Phi: \mathcal{A} \mapsto \B{H}$ given by $\Phi(f) = f(T)$. 
The notion of operator radii for the unital homomorphism $\Phi$ of the rational functional calculus defined above leads to a generalisation of $K$-spectral sets and $K$-numerical radius sets. We say that $\Omega $ is a $K$-$\NumRho{\rho}$ \emph{set} for $T$ if 
for every rational function $f$ we have 
$$
  \NumRho{\rho}(f(T)) \le K \|f\|_{\Omega}.
$$
The analogue notion of a \emph{complete} $K$-$\NumRho{\rho}$ \emph{set} is defined in a similar way.
The case $\rho=1$ corresponds to the usual notion of $K$-spectral sets while $K$-$\NumRho{2}$ sets are the $K$-numerical radius sets from  \cite{Davidson_Paulsen_Woerdeman_2017}. 
We deduce the following result from Theorem \ref{ThMainTh}. 
\begin{theorem}
\label{ThSpectalSetKtoKtilde}
Let $\Omega$ be a domain of the complex plane and let $K \ge1$ and $\rho \ge 1$. Set 
\[
  \widetilde{K} =\frac{K^2+1+\sqrt{(K^2+1)^2-4\rho (2-\rho )K^2}}{2\rho K}.
\]
  Then $\Omega$ is a $K$-spectral set for $T$ if and only if $\Omega$ is a $\widetilde{K}$-$\NumRho{\rho} $ spectral set for $T$. Also, $\Omega$ is a complete $K$-spectral set for $T$ if and only if $\Omega$ is a complete $\widetilde{K}$-$\NumRho{\rho} $ spectral set for $T$.
\end{theorem}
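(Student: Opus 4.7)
The strategy is to derive Theorem~\ref{ThSpectalSetKtoKtilde} as an immediate consequence of Theorem~\ref{ThMainTh} applied to the rational functional calculus. I would take $\mathcal{A} = R(\overline{\Omega})$, which, as a closed subalgebra of the commutative $C^*$-algebra $C(\overline{\Omega})$, is an operator algebra and therefore satisfies the von Neumann inequality. Under the hypothesis $\sigma(T) \subset \overline{\Omega}$, the rational functional calculus $\Phi: \mathcal{A} \to \B{H}$, $f \mapsto f(T)$, is a well-defined unital homomorphism to which Theorem~\ref{ThMainTh} can be applied.

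The key observation is that the two spectral-set conditions translate verbatim into the two operator radii of $\Phi$: by definition, $\Omega$ is a $K$-spectral set for $T$ if and only if $\|\Phi\| \le K$, and $\Omega$ is a $\widetilde{K}$-$w_\rho$ set for $T$ if and only if $\|\Phi\|_{w_\rho} \le \widetilde{K}$. The first equivalence of the theorem then follows directly from Theorem~\ref{ThMainTh}(a): its proof proceeds through a chain of equivalences that yields the stronger inequality form $\|\Phi\| \le K \Leftrightarrow \|\Phi\|_{w_\rho} \le \widetilde{K}$, which is exactly what is needed.

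For the complete version, the matrix amplifications $\Phi^{(n)}: \Matrice_n(\mathcal{A}) \to \B{H^{(n)}}$ correspond to evaluating matrix-valued rational functions at $T$, so the complete $K$-spectral and complete $\widetilde{K}$-$w_\rho$ conditions amount to $\|\Phi\|_{cb} \le K$ and $\|\Phi\|_{w_\rho cb} \le \widetilde{K}$ respectively. Since $\mathcal{A}$ is an operator algebra, Theorem~\ref{ThMainTh}(b) directly yields the desired second equivalence.

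I do not expect any real obstacle: once the dictionary between (complete) spectral-set constants and operator radii of $\Phi$ is made explicit, Theorem~\ref{ThSpectalSetKtoKtilde} is essentially a restatement of Theorem~\ref{ThMainTh}. The only small point worth articulating in the write-up is that although Theorem~\ref{ThMainTh} is stated with strict equalities, its proof in fact establishes the inequality form, which is precisely the form that translates into the $K$-spectral/$\widetilde{K}$-$w_\rho$ equivalence we are after.
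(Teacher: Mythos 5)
Your proposal matches the paper's own deduction exactly: the paper introduces $\mathcal{A}=R(\overline{\Omega})$ as an operator algebra, defines $\Phi(f)=f(T)$, identifies the (complete) $K$-spectral and $\widetilde K$-$w_\rho$ conditions with the corresponding operator radii of $\Phi$, and invokes Theorem~\ref{ThMainTh}. Your remark that the proof of Theorem~\ref{ThMainTh} actually yields the inequality form needed here is a correct and worthwhile clarification.
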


\subsection*{An extension of a result of Drury} 
As mentioned in Introduction, Drury \cite{Drury_2008} proved that if $\NumRay{T} \le 1 $, then $\D $ is a $\frac{5}{4}$-numerical radius set for $T$. We generalise this result for the operator radius $w_{\rho}$ as follows.

\begin{theorem}
  Let $\rho \ge 1$ and  let $T \in \B{H} $ be such that $\NumRho{\rho}(T) \le 1$.
  Denote
  $$
  C = \frac{\rho^2 + 1 + (\rho - 1)\sqrt{5\rho^2+2\rho+1}}{2\rho^2}
  $$
  Then $\D $ is a complete $C$-$\NumRho{\rho}$ spectral set for $T$.  
\end{theorem}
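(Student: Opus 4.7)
The plan is to deduce this theorem from Theorem \ref{ThMainTh}(b) applied to the unital homomorphism $\Phi: R(\bar{\mathbb{D}}) \to \B{H}$ defined by $\Phi(f) = f(T)$. Because $r(T) \le w_\rho(T) \le 1$, the spectrum of $T$ is contained in $\bar{\mathbb{D}}$, so $\Phi$ is well-defined, and $R(\bar{\mathbb{D}})$ is an operator algebra (as a closed subalgebra of $\mathrm{C}(\bar{\mathbb{D}})$). The conclusion we seek --- that $\mathbb{D}$ is a complete $C$-$w_\rho$ spectral set for $T$ --- is precisely $\|\Phi\|_{w_\rho cb} \le C$.

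The first substantial step is to bound $\|\Phi\|_{cb}$. I will invoke the classical inequality of Sz.-Nagy and Foia\c{s} (Theorem 11.2 of \cite{Nagy_Foias}): if $T \in \mathcal{C}_\rho$, then $\|f(T)\| \le \rho \|f\|_\Omega$ for every $f \in R(\bar{\mathbb{D}})$, and the argument carries through verbatim to matrix-valued $f$, giving $\|\Phi\|_{cb} \le \rho$. In other words, $\mathbb{D}$ is already a complete $\rho$-spectral set for any $T \in \mathcal{C}_\rho$. Note that the naive bound obtained directly from the $\rho$-dilation formula $f(T) = (1-\rho)f(0) I + \rho Pf(U) P^*$ only yields $(2\rho{-}1)\|f\|_\Omega$, so the sharper constant $\rho$ is the nontrivial ingredient borrowed from $\mathcal{C}_\rho$-theory.

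Theorem \ref{ThMainTh}(b) then gives $\|\Phi\|_{w_\rho cb} \le \widetilde{K}(\rho,\rho)$, where $\widetilde{K}$ is the function appearing in the statement of Theorem \ref{ThMainTh}. The proof therefore reduces to verifying the algebraic identity $\widetilde{K}(\rho,\rho) = C$. Substituting $K=\rho$, the discriminant inside the square root becomes
\[
(\rho^2+1)^2 - 4\rho^3(2{-}\rho) = 5\rho^4 - 8\rho^3 + 2\rho^2 + 1,
\]
which factors as $(\rho{-}1)^2 (5\rho^2 + 2\rho + 1)$. Since $\rho \ge 1$, its square root equals $(\rho{-}1)\sqrt{5\rho^2+2\rho+1}$, and plugging back into the formula for $\widetilde{K}$ yields exactly the stated value of $C$.

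The main obstacle is invoking the Sz.-Nagy-Foia\c{s} sharp bound $\|\Phi\|_{cb} \le \rho$ in its complete (matrix-valued) form --- this is the step that does not follow directly from the dilation formula. Once it is in hand, the result is an immediate consequence of Theorem \ref{ThMainTh}(b) together with the polynomial identity above.
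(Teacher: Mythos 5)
Your proposal is correct and follows essentially the same route as the paper: both reduce the statement to the known fact that $\D$ is a \emph{complete} $\rho$-spectral set for any $T$ with $w_\rho(T)\le 1$, and then apply the main theorem with $K=\rho$ (your algebraic verification that $\widetilde{K}=C$ in this case is accurate, and the paper leaves it implicit). The only quibble is attribution: the sharp constant $\rho$ (rather than the $2\rho-1$ coming directly from the dilation formula, as you correctly observe) is due to Okubo and Ando, which is the reference the paper cites, not to the Sz.-Nagy--Foia\c{s} book.
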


\begin{proof}
It was proved by Okubo and Ando \cite{Okubo_Ando_1975} that $\D $ is a complete $\rho$-spectral set for $T$ if $\NumRho{\rho}(T) \le 1$. We apply now Theorem \ref{ThSpectalSetKtoKtilde} with $K=\rho$. 
\end{proof}

\section{Joint numerical radius of non-commuting $n$-tuples of operators}

As a different application of the main result we consider in this section non-commuting $n$-tuples of operators with joint numerical radius at most one, a notion introduced in \cite{popescu}. In the same paper Popescu proved, among other things, several analogues of univariate results, including an analogue of the Berger's dilation theorem and of the Berger-Stampfli-Kato theorem \eqref{eq:BergerStampfli} (see \cite[Cor. 1.11]{popescu}). We show here how Theorem \ref{ThMainTh} implies in this non-commutative setting analogues of the Okubo-Ando result mentioned above (for $\rho=2$) and of Drury's result \eqref{eq:drury}. This partially improves \cite[Cor. 1.9]{popescu}.

We need the following definitions. We refer to \cite{popescu,popescu1} and the references therein for more information. Let $n \in \N^*$.
Denote by $\Free^+_n$ the free semigroup
with $n$ generators $g_1, g_2, \dots , g_n$ and the identity element $g_0$.
For $\alpha \in \Free^+_n$ we define 
$$
\abs{\alpha} = \left\lbrace\begin{array}{ll}
k & \textrm{if } \alpha = g_{i_1}g_{i_2} \dots g_{i_k} \\ 
0 & \textrm{if } \alpha = g_0 .
\end{array}   \right.
$$
Let $T_1, T_2, \dots , T_n \in \B{H}$.
The \emph{joint numerical radius} of $(T_1, T_2, \dots , T_n)$ is defined by 
$$
\NumRayJoint(T_1, \cdots , T_n) = \sup \abs{\sum_{\alpha \in \Free^+_n} \sum_{j=1}^n \ps{h_\alpha}{T_j h_{g_j\alpha}}},
$$
where the supremum is taken over all families of of vectors $\lbrace h_ \alpha, \alpha \in \Free^+_n \rbrace \subset H $ 
with 
$$\sum_{\alpha \in \Free^+_n} \norme{h_\alpha}^2 = 1 .$$
It has been proved in \cite{popescu} that this notion coincides with the classical one for $n=1$. This also follows from the formula \cite[Cor. 1.2]{popescu}
$$ \NumRayJoint(T_1, \cdots , T_n) = w(S_1\otimes T_1^* + \cdots + S_n\otimes T_n^*),$$
where $S_1, \cdots, S_n$ are the left creation operators defined below. Notice however that, in general, $\NumRayJoint(T_1, T_2, \dots , T_n) \neq \NumRayJoint(T^*_1, T^*_2, \dots , T^*_n)$ if $n\ge 2$.

Let $H_n $ be a $n$ dimensional Hilbert space with orthonormal basis $e_1, e_2, \dots , e_n $.
The full Fock space of $H_n$ is 
$$
  F^2(H_n) = \bigoplus_{k \ge 0} H_n^{\otimes k},
$$
where  $H_n^{\otimes 0} = \C 1$, and $ H_n^{\otimes k} $ is the Hilbert tensor product of $k$ copies of $H_n$. Denote the norm of $F^2(H_n)$ as $\|\cdot\|_2$. 

The left creation operators $S_i : F^2(H_n) \rightarrow F^2(H_n) $ 
are defined for every $\varphi \in F^2(H_n)$ by 
$$S_i \varphi = e_i \otimes \varphi .$$

Set
$$
e_{\alpha} = \left\lbrace\begin{array}{ll}
e_{i_1} \otimes e_{i_2} \otimes \dots \otimes e_{i_n} & \textrm{if } \alpha = g_{i_1}g_{i_2} \dots g_{i_k} \\ 
1 & \textrm{if } \alpha = g_0 .
\end{array}   \right.
$$
Denote by $\mathcal{P}_n$ the set of all $p$ in $ F^2(H_n) $ of the form
$$
p = \sum_{\abs{\alpha} \le m} a_\alpha e_\alpha
$$
with $m \in \N$ and $a_\alpha \in \C $. The set $\mathcal{P}_n$ may be viewed as the algebra of polynomials in $n$ non-commuting indeterminates, with $p\otimes q$, $\, p,q \in  \mathcal{P}_n$, as multiplication. Define now $F_n^{\infty}$ as the set of all $g\in F^2(H_n)$ such that 
$$ \|g\|_{\infty} := \sup \{\|g\otimes p\|_2 : p\in \mathcal{P}_n, \|p\|_2 \le 1\} < \infty .$$
We denote by $\mathcal{A}_n$ the closure of $\mathcal{P}_n$ in $(F_n^{\infty},\|\cdot\|_{\infty})$. The Banach algebra $\mathcal{A}_n$, which can be viewed as a non-commutative analogue of the disc algebra, is an operator algebra (see for instance \cite{popescu1}). 
Denote 
$$
T_{\alpha} = \left\lbrace\begin{array}{ll}
T_{i_1}T_{i_2}\dots T_{i_n} & \textrm{if } \alpha = g_{i_1}g_{i_2} \dots g_{i_k} \\ 
I & \textrm{if } \alpha = g_0 .
\end{array}   \right.
$$
For $p \in \mathcal{P}_n$ denote by $p(T_1,T_2,\dots ,T_n)$ the operator $\sum_{\abs{\alpha} \le m} a_\alpha T_\alpha $. Notice that $\norme{p}_\infty \le 1$ if and only if $\norme{p(S_1, \dots, S_n)} \le 1$ (see for instance \cite[Thm. 3.1]{popescu1}). We consider the map
$$ \Psi : \mathcal{A}_n \mapsto \B{H}, \quad \Psi(p) = p(T_1, \cdots, T_n) .$$

We can now state the analogues of the results of Okubo-Ando and Drury. 

\begin{theorem}\label{thm:pop}
Let $T_1, \cdots , T_n \in \B{H}$ with joint numerical radius $w_{J}(T_1, T_2, \dots , T_n) \le 1$. Then $\|\Psi\| \le 2$ and $\|\Psi\|_{w} \le \frac{5}{4}$.
\end{theorem}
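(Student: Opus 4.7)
The unital homomorphism $\Psi:\mathcal{A}_n\to\B{H}$ is defined on the non-commutative disc algebra $\mathcal{A}_n$, which (as recalled in the paper) is an operator algebra. Theorem~\ref{ThMainTh}(a) therefore applies to $\Psi$. Specializing to $\rho=2$ and $K=2$ gives $\widetilde{K}=\tfrac12(2+2^{-1})=\tfrac54$; since $\widetilde{K}$ is monotone in $K$ for fixed $\rho$, the two inequalities $\|\Psi\|\le 2$ and $\|\Psi\|_{w}\le \tfrac54$ are equivalent, and it suffices to prove the first.

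For the norm bound, I would invoke Popescu's non-commutative analog of Berger's dilation theorem (\cite[Cor.~1.11]{popescu}): the hypothesis $\NumRayJoint(T_1,\ldots,T_n)\le 1$ is equivalent to the existence of a Hilbert space $\mathcal{H}\supset H$ and a row isometry $(V_1,\ldots,V_n)$ on $\mathcal{H}$ such that $T_\alpha=2\,P_H V_\alpha|_H$ for every word $\alpha\in\Free^+_n$ with $|\alpha|\ge 1$, where $P_H$ denotes the orthogonal projection of $\mathcal{H}$ onto $H$. Writing $\Psi_V(p)=p(V_1,\ldots,V_n)$ for the associated completely isometric functional calculus on $\mathcal{A}_n$, the dilation yields the identity
\[
\Psi(p) \;=\; -p(0)\,I \;+\; 2\,P_H\,\Psi_V(p)|_H \qquad (p\in\mathcal{P}_n),
\]
since $\Psi_V(p)-p(0)\,I_\mathcal{H}=\sum_{|\alpha|\ge 1}a_\alpha V_\alpha$.

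The heart of the argument is to extract from this identity the sharp estimate $\|\Psi(p)\|\le 2\|p\|_\infty$. The naive triangle inequality yields only $\|\Psi(p)\|\le |p(0)|+2\|p\|_\infty\le 3\|p\|_\infty$, and sharpening the constant from $3$ to $2$ is the main technical obstacle; this is the non-commutative counterpart of the scalar Okubo--Ando theorem. I would either appeal directly to \cite[Cor.~1.9]{popescu}, which supplies the required norm estimate in this setting, or adapt the scalar Okubo--Ando proof to the non-commutative context using Popescu's non-commutative Poisson kernel for row isometries (which replaces the operator-valued Poisson integral representation used in the single-variable proof). Once $\|\Psi\|\le 2$ is secured, Theorem~\ref{ThMainTh}(a) immediately delivers $\|\Psi\|_{w}\le \tfrac54$, completing the proof, in exact parallel with the scalar case where Drury's bound $\tfrac54$ follows from Okubo--Ando's norm bound $2$ via Theorem~\ref{ThMainSimplified}.
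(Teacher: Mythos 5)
Your logical skeleton is sound: since $\mathcal{A}_n$ is an operator algebra, Theorem~\ref{ThMainTh}(a) with $\rho=2$ and the monotonicity of $K\mapsto\widetilde{K}=\frac12(K+K^{-1})$ do make the two bounds $\|\Psi\|\le 2$ and $\|\Psi\|_w\le\frac54$ equivalent, so it suffices to prove either one. The problem is that you elected to prove the norm bound $\|\Psi\|\le 2$, and that is exactly the step you do not carry out. You correctly identify that the dilation identity $\Psi(p)=-p(0)I+2P_H\Psi_V(p)|_H$ only gives the constant $3$ by the triangle inequality, call the improvement to $2$ ``the main technical obstacle,'' and then offer two unexecuted escape routes. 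The first --- ``appeal directly to \cite[Cor.~1.9]{popescu}'' --- is doubtful: the paper explicitly states that Theorem~\ref{thm:pop} \emph{improves} that corollary, so it cannot be assumed to already contain the sharp bound $2$. The second --- adapt the Okubo--Ando argument via non-commutative Poisson kernels --- is precisely the non-trivial multivariate theorem one would need to write out, and no details are given. As it stands the proof has a genuine gap at its central step.

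The paper resolves this by running the equivalence in the opposite direction: it proves $\|\Psi\|_w\le\frac54$ \emph{first} and then reads off $\|\Psi\|\le 2$ from Theorem~\ref{ThMainTh}. Concretely, for $p$ with $\|p(S_1,\dots,S_n)\|\le 1$ one composes with the M\"obius map $b(z)=(z-a_0)/(1-\conj{a_0}z)$, $a_0=p(0,\dots,0)\in\D$, so that $h=b(p)$ vanishes at the origin; Popescu's non-commutative Berger--Stampfli--Kato theorem \cite[Cor.~1.11]{popescu} (applied to truncations $h_N$ with $h_N(0)=0$) gives $w(h(T_1,\dots,T_n))\le 1$, and then the \emph{univariate} Drury bound \cite{Drury_2008} applied to the single operator $h(T_1,\dots,T_n)$ yields $w(p(T))=w(b^{-1}(h(T)))\le\frac54$. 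This reduction to one-variable results is what makes the hard multivariate Okubo--Ando-type estimate unnecessary; I would encourage you to reverse the direction of your argument accordingly.
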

We first need the following lemma.
\begin{lemma}
  Let $p \in \mathcal{P}_n$.
  If $p$ is non constant and $\norme{p}_{\mathcal{A}_n} \le 1 $, then $\abs{p(0,\dots,0)} < 1 $.
\end{lemma}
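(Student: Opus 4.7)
The plan is to exploit the vacuum vector $1 \in H_n^{\otimes 0} \subset F^2(H_n)$, together with the identification of $\norme{p}_{\mathcal{A}_n}$ with the operator norm of $p(S_1,\ldots,S_n)$ that is recorded just before the statement of the lemma.

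First I would recall that the family $\{e_\alpha\}_{\alpha\in\Free^+_n}$ is an orthonormal basis of $F^2(H_n)$, so for $p = \sum_{\abs{\alpha}\le m} a_\alpha e_\alpha$ the Fock space norm is
\[
  \norme{p}_2^{\,2} \;=\; \sum_\alpha \abs{a_\alpha}^2 .
\]
Next I would compute the action of $p(S_1,\ldots,S_n)$ on the vacuum vector: since $S_{i_1}S_{i_2}\cdots S_{i_k}(1) = e_{i_1}\otimes\cdots\otimes e_{i_k} = e_\alpha$ for $\alpha = g_{i_1}\cdots g_{i_k}$, linearity gives $p(S_1,\ldots,S_n)(1) = \sum_\alpha a_\alpha e_\alpha = p$ viewed as an element of $F^2(H_n)$. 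Because $\norme{1}_2 = 1$, this yields
\[
  \norme{p}_2 \;=\; \norme{p(S_1,\ldots,S_n)(1)}_2 \;\le\; \norme{p(S_1,\ldots,S_n)} \;=\; \norme{p}_{\mathcal{A}_n} \;\le\; 1.
\]

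Combining with the orthogonality identity gives $\abs{a_{g_0}}^2 + \sum_{\alpha \ne g_0} \abs{a_\alpha}^2 \le 1$. Since $p$ is non-constant, at least one coefficient $a_\alpha$ with $\alpha \ne g_0$ is non-zero, so the inequality $\abs{a_{g_0}}^2 < 1$ is strict. Finally, substituting $T_j = 0$ for every $j$ annihilates every $T_\alpha$ with $\abs{\alpha}\ge 1$ while leaving $T_{g_0} = I$, so $p(0,\ldots,0) = a_{g_0}$, and the conclusion $\abs{p(0,\ldots,0)} < 1$ follows.

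I do not anticipate any genuine obstacle: this is the non-commutative Fock-space analogue of the elementary observation that the constant term of an $H^\infty$ function on the disc is strictly smaller in modulus than the $H^\infty$-norm unless the function is constant. The only point worth flagging is the identification $\norme{p}_{\mathcal{A}_n} = \norme{p(S_1,\ldots,S_n)}$, which is exactly what makes the vacuum-vector argument applicable.
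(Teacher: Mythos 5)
Your proof is correct and follows essentially the same route as the paper: both apply $p(S_1,\ldots,S_n)$ to the vacuum vector, use orthonormality of $\{e_\alpha\}$ to bound $\sum_\alpha \abs{a_\alpha}^2$ by $\norme{p}_{\mathcal{A}_n}^2$, and conclude from the existence of a nonzero coefficient $a_\alpha$ with $\alpha\neq g_0$. The only cosmetic difference is that the paper phrases the last step as a proof by contradiction while you argue directly.
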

\begin{proof}
  Let 
$$
p = \sum_{\abs{\alpha} \le m} a_\alpha e_\alpha
$$  
  be non constant.
  We have 
  \begin{align*}
  \norme{p}_{\mathcal{A}_n}^2
   &\ge \norme{p(S_1,S_2, \dots, S_n) e_0}_{F^2(H_n)}^2 \\
   &= \norme{\sum_{\abs{\alpha} \le m} a_\alpha S_\alpha e_0}_{F^2(H_n)}^2 \\
   &= \norme{\sum_{\abs{\alpha} \le m} a_\alpha e_\alpha}_{F^2(H_n)}^2 \\
   &= \sum_{\abs{\alpha} \le m} \abs{a_\alpha} ^2 .
 \end{align*}
 Note that $p(0, \dots, 0) = a_0 $. 
 As $p$ is non constant, there exists $\alpha \ne g_0$ such that $a_\alpha \ne 0$.
 Suppose, for the sake of contradiction, that $\abs{p(0, \dots, 0)} \ge 1$. Then 
 $$\norme{p}_{\mathcal{A}_n} \ge \sqrt{\abs{p(0)}^2 + \abs{a_\alpha}^2} > 1 .$$
 This contradicts the hypothesis $\norme{p}_{\mathcal{A}_n} \le 1 $, so  $\abs{p(0,\dots,0)} < 1 $.
\end{proof}
\begin{proof}[Proof of Theorem \ref{thm:pop}]
Let
  $$
  p = \sum_{\abs{\alpha} \le m} a_\alpha e_\alpha 
  $$
  be a non constant polynomial in the non-commutative disc algebra $\mathcal{A}_n$ 
  such that 
  $$\norme{p(S_1, S_2, \dots , S_n)} \le 1.$$
  Denote 
  $$
  q = \sum_{\abs{\alpha} \le m, \alpha \ne 0} a_\alpha e_\alpha.
  $$
  Then $p = a_0 e_0 + q $.
  Note that $a_0 \in \D $.
  Denote 
  $$
  b(z) = \frac{z - a_0}{1-\conj{a_0}z}. 
  $$
  Then $b \in \Aut(\D)$ and $ p = b^{-1} (h)$, where $h = b(p)$.
  Then
  \begin{align*}
    h &= b(p) \\
      &= (p- a_0) (1 - \conj{a_0}p)^{-1} \\
      &= q  \sum_{k=0}^\infty (\conj{a_0}p)^k \\ 
      &= \lim_{N \rightarrow \infty} q \sum_{k=0}^N (\conj{a_0}p)^k .
  \end{align*}   
 Set $h_N =  q \sum_{k=0}^N (\conj{a_0}p)^k $.
  Then $h_N(0, \dots, 0) = 0 $ and so, by the analogue of the Berger-Stampfli-Kato mapping theorem proved in \cite[Cor. 1.11]{popescu}, we have 
  $$\NumRay{h_N(T_1, T_2, \dots, T_n)} \le \norme{h_N(S_1, S_2, \dots, S_n)}.$$
  Letting $N \rightarrow \infty$ we get 
  $$
   \NumRay{h(T_1, T_2, \dots, T_n)} \le \norme{b(p(S_1, S_2, \dots, S_n))} \le 1 .
  $$
Using Drury's result from \cite{Drury_2008}, we obtain
  $$
  \NumRay{p(T_1, T_2, \dots, T_n)} = \NumRay{b^{-1}(h(T_1, T_2, \dots, T_n))} \le \frac{5}{4} .
  $$
Therefore
  $$
  \| \Psi \|_{w} \le \frac{5}{4}.
  $$
Applying now Theorem \ref{ThMainTh} we get
  $$ 
  \|\Psi\| \le 2 .
  $$

The proof is complete.
\end{proof}

\bibliographystyle{plain}
\bibliography{BaCrKl-final}

\begin{thebibliography}{10}

\bibitem{BadBeck}
Catalin {Badea} and Bernhard {Beckermann}.
\newblock {\em Spectral sets}, chapter 37 in Handbook of linear algebra, 2nd
  edition (L.Hogben, ed.).
\newblock Chapman and Hall/CRC, 2014.

\bibitem{Berger_Stampfli_1967}
C.~A. Berger and J.~G. Stampfli.
\newblock Mapping theorems for the numerical range.
\newblock {\em Amer. J. Math.}, 89:1047--1055, 1967.

\bibitem{CassierFack}
Gilles Cassier and Thierry Fack.
\newblock Contractions in von {N}eumann algebras.
\newblock {\em J. Funct. Anal.}, 135(2):297--338, 1996.

\bibitem{CrPa}
M.~{Crouzeix} and C.~{Palencia}.
\newblock {The numerical range is a $(1+\sqrt{2})$-spectral set.}
\newblock {\em {SIAM J. Matrix Anal. Appl.}}, 38(2):649--655, 2017.

\bibitem{Davidson_Paulsen_Woerdeman_2017}
Kenneth~R. {Davidson}, Vern~I. {Paulsen}, and Hugo~J. {Woerdeman}.
\newblock {Complete spectral sets and numerical range.}
\newblock {\em {Proc. Amer. Math. Soc.}}, 146(3):1189--1195, 2018.

\bibitem{Dixon}
P.G. {Dixon}.
\newblock {Banach algebras satisfying the non-unital von Neumann inequality.}
\newblock {\em {Bull. Lond. Math. Soc.}}, 27(4):359--362, 1995.

\bibitem{Drury_2008}
S.~W. Drury.
\newblock Symbolic calculus of operators with unit numerical radius.
\newblock {\em Linear Algebra Appl.}, 428(8-9):2061--2069, 2008.

\bibitem{Kato_1965}
Tosio {Kato}.
\newblock {Some mapping theorems for the numerical range.}
\newblock {\em {Proc. Japan Acad.}}, 41:652--655, 1965.

\bibitem{Klaja_Mashreghi_Ransford_2016}
Hubert {Klaja}, Javad {Mashreghi}, and Thomas {Ransford}.
\newblock {On mapping theorems for numerical range.}
\newblock {\em {Proc. Amer. Math. Soc.}}, 144(7):3009--3018, 2016.

\bibitem{Okubo_Ando_1975}
Kazuyoshi Okubo and Tsuyoshi Ando.
\newblock Constants related to operators of class {$C_{\rho }$}.
\newblock {\em Manuscripta Math.}, 16(4):385--394, 1975.

\bibitem{PaulsenBook}
Vern Paulsen.
\newblock {\em Completely bounded maps and operator algebras}, volume~78 of
  {\em Cambridge Studies in Advanced Mathematics}.
\newblock Cambridge University Press, Cambridge, 2002.

\bibitem{Paulsen_1984}
Vern~I. Paulsen.
\newblock Every completely polynomially bounded operator is similar to a
  contraction.
\newblock {\em J. Funct. Anal.}, 55(1):1--17, 1984.

\bibitem{Paulsen1}
Vern~I. Paulsen.
\newblock Toward a theory of {$K$}-spectral sets.
\newblock In {\em Surveys of some recent results in operator theory, {V}ol.
  {I}}, volume 171 of {\em Pitman Res. Notes Math. Ser.}, pages 221--240.
  Longman Sci. Tech., Harlow, 1988.

\bibitem{Pisier_1997}
Gilles Pisier.
\newblock A polynomially bounded operator on {H}ilbert space which is not
  similar to a contraction.
\newblock {\em J. Amer. Math. Soc.}, 10(2):351--369, 1997.

\bibitem{PisierBook}
Gilles {Pisier}.
\newblock {\em {Similarity problems and completely bounded maps. Includes the
  solution to ``The Halmos problem''. 2nd, expanded ed.}}
\newblock Berlin: Springer, 2001.

\bibitem{popescu1}
Gelu Popescu.
\newblock Non-commutative disc algebras and their representations.
\newblock {\em Proc. Amer. Math. Soc.}, 124(7):2137--2148, 1996.

\bibitem{popescu}
Gelu Popescu.
\newblock Unitary invariants in multivariable operator theory.
\newblock {\em Mem. Amer. Math. Soc.}, 200(941):vi+91, 2009.

\bibitem{Nagy_Foias}
B\'ela Sz.-Nagy, Ciprian Foias, Hari Bercovici, and L\'aszl\'o K\'erchy.
\newblock {\em Harmonic analysis of operators on {H}ilbert space}.
\newblock Universitext. Springer, New York, second edition, 2010.

\bibitem{vN}
John {von Neumann}.
\newblock {Eine Spektraltheorie f\"ur allgemeine Opertoren eines unit\"aren
  Raumes.}
\newblock {\em {Math. Nachr.}}, 4:258--281, 1951.

\end{thebibliography}
\end{document}